%

\documentclass[a4paper,12pt]{article}

\usepackage{amsthm,amsmath}
\usepackage{color}
\usepackage{enumerate}
\usepackage{natbib}
\usepackage{tikz}
\usepackage{authblk}
\usepackage[margin=1.25in]{geometry}
\RequirePackage[colorlinks,citecolor=blue,urlcolor=blue]{hyperref}


\renewenvironment{proof}{{\bfseries Proof.}}{\\ \hfill \qed\\}

\newtheorem{proposition}{Proposition}
\newtheorem{lemma}{Lemma}
\newtheorem{theorem}{Theorem}
\def\diag{\mathop{\rm diag}\nolimits}

\newenvironment{Assumptions}%
    {\begin{enumerate}[(\assName \arabic{Assumption})]\stepcounter{Assumption}
    }%
    {\end{enumerate}}
\def\assumption{\stepcounter{Assumption}\item}
\newcounter{Assumption} 
\def\assName{{\bf\rm A}}

\DeclareMathOperator*{\argmin}{arg\,min}
\DeclareMathOperator*{\argmax}{arg\,max}



\title{On Bayesian robust regression with diverging number of predictors}
\author[1,2]{Daniel Nevo}
\author[1,3]{Ya'acov Ritov}
\affil[1]{Department of Statistics, The Hebrew University of Jerusalem}
\affil[2]{Departments of Biostatistics and Epidemiology\\
	
	Harvard T.H. Chan School of Public Health}

\affil[3]{Department of Statistics, University of Michigan}
\date{}
\begin{document}
	\maketitle
\begin{abstract}
This paper concerns the robust regression model when the number of predictors and the number of observations grow in a similar rate. Theory for  M-estimators in this regime has been recently developed by several authors \citep{el2013robust,bean2013optimal,donoho2013high}.
 Motivated by the inability of M-estimators to successfully estimate the Euclidean norm of the coefficient vector,  we consider a Bayesian framework for this model. We suggest a two-component mixture of normals prior for the coefficients and develop a Gibbs sampler procedure for sampling from relevant posterior distributions, while utilizing a scale mixture of normal representation for the error distribution . Unlike  M-estimators, the proposed Bayes estimator is consistent in the Euclidean norm sense. Simulation results demonstrate the superiority of the Bayes estimator  over traditional estimation methods.
\end{abstract}

\section{Introduction}
When fitting a linear regression model to data, estimators robust to outliers are often desired. One popular approach to achieve robustness to outliers is to use M-estimators under a penalty function other than the quadratic one. This methodology is often termed as ``robust regression''. Classical results for robust regression are that the M-estimator of the coefficients vector is consistent and normally distributed \citep[see][Chap. 7]{huber2011robust}.
These results were obtained for the case $p$, the number of predictors,  is fixed or
grows slowly with the number of observations, $n$.  The case where $p$ grows faster
than $n$ have been drawing  a lot of a attention.  In that scenario, a popular
approach is to consider penalization based estimation methods, e.g., the Lasso \citep{tibshirani1996regression}.

We consider a different scenario. Assume  that $p<n$, yet $p$ grows at the same rate as $n$. That is, $p/n\rightarrow \kappa$ for some positive constant $\kappa < 1$. This scenario was
first recognized as an interesting one by \cite{huber1973robust}. It is, however,
only with the emergence of ``big data'' that researchers have begun to investigate the robust regression model under this regime. Asymptotic distribution and variance calculations were recently developed  for M-estimators in this regime \citep{el2013robust,karoui2013asymptotic,donoho2013high}. Arguably the main result is that while the obtained estimator is normally distributed, its variance differs from Huber's classical results. \cite{bean2013optimal} have further shown that, unlike the classical $p\ll n$ scenario, the optimal M-estimator, in terms of efficiency, is not obtained by maximizing the log density of the
errors. One  more striking result is that for Double-Exponential errors, for $\kappa$ larger than approximately
$0.3$, least squares regression is superior to least absolute deviations regression.

We argue in the current paper that M-estimation might be the wrong approach to the robust regression model in the
$p/n\rightarrow \kappa,\; 0< \kappa < 1$ regime. Our main motivation is as follows.
When using M-estimators, the estimation error of a single coefficient is in the usual order of $n^{-1/2}$ \citep{el2013robust}. However,
the error accumulated over the coefficient vector does not vanish when $n\rightarrow\infty$. We will further argue that when signal and the noise are of the same asymptotic order, only a few of the true coefficient values can be larger (in their absolute value) than $n^{-1/2}$. Putting it together, the estimation error of small coefficients, which are the majority,  is larger than their actual value (in absolute value) when using M-estimators. Thus, apparent large effects can be actually microscopic.

Recognizing this characteristic of the problem, shrinkage methods may be a better fit in this robust regression model. Shrinkage can be achieved by using either the aforementioned regularization methods or by using  a Bayesian approach with appropriate scaling of the prior hyperparameters. In this paper we consider the latter option.

It is well known that shrinkage can be achieved using a Bayesian methodology. Most of the discussion is centered in the normal error model. The James-Stein estimator, \citep{james1961estimation}, is an empirical Bayes estimator \citep{efron1973stein}; Ridge regression estimator is identical to what we get if assuming the regression coefficients are iid with normal prior, and a maximum \emph{a posteriori}  (MAP) estimator is used; and if we replace the normal prior with Laplace distribution prior we get the Lasso. The Laplace prior for the coefficients is actively researched. An efficient Gibbs sampler was suggested by \cite{park2008bayesian}. However, the Lasso attracts criticism from a Bayesian point of view, since the full posterior distribution of the coefficients vector does not attain the same risk rate as the posterior mode \citep{castillo2015bayesian}. Another, more recent, prior proposal is the horseshoe prior \citep{carvalho2010horseshoe,carvalho2009handling}, which has some appealing properties, at least when the design matrix is the identity \citep{van2014horseshoe}.

  Our description above implies that the coefficients can be separated to two groups: small-value and large-value coefficients (in their absolute size).  This perspective aligns with existing Bayesian variable selection literature, where priors are assigned hierarchically. First, coefficients are separated into two groups and  then a prior distribution is determined according to the group assignment. Often, though not necessarily, one of the priors is the degenerate distribution at zero. A leading example for this framework is SVSS, Stochastic Search Variable Selection \citep{george1993variable,george1997approaches}.  An alternative approach is to have a prior distribution on the number of  non-zero coefficients, to choose these coefficients uniformly, and then to have a prior on the non-zero coefficients \citep{castillo2012needles},

  In this paper we suggest a full Bayesian model for the robust regression model when $p/n\rightarrow \kappa,\; 0< \kappa < 1$. We choose the prior distributions and hyperparameters such that our prior knowledge on the design is taken into account. We then utilize a scale mixture of normal representation of the error distribution to construct a reasonably fast Gibbs Sampler.

The rest of the paper is organized as follows. In Section \ref{Sec:Achilles}, we present notation and model assumptions, and  claim that M-estimation should not be used in this $p$-close-to-$n$ regime.
In Section \ref{Sec:BysModel}, we introduce an hierarchical Bayesian model and then, in Section \ref{Sec:Sampling}, we present a Gibbs sampler for parameter estimation. Detailed example is given in Section \ref{Sec:example} where we also present simulation results. Section \ref{Sec:Discuss} offers conclusion remarks. Proofs are given in Section \ref{Sec:proofs}.
\section{Achilles heel of M-estimators when $p/n\rightarrow\kappa\in(0,1)$}
\label{Sec:Achilles}
We start with notations. We use $\| \cdot\|,\| \cdot\|_1 $ and $\|\cdot\|_\infty$ for the Euclidean norm, the $\ell_1$ norm and the maximum norm of a vector, respectively.  Throughout the paper we consider the model
\begin{equation}
\label{Eq:Model}
Y^{(n)}=X^{(n)}\beta^{(n)}+\epsilon^{(n)},
\end{equation}
where  $\epsilon^{(n)}$ is a vector of i.i.d random variables with a known density function
$f_\epsilon(\cdot;\theta)$ characterized by $\theta$, an unknown parameter,  $X^{(n)}$ is a matrix of random
predictors,  $\beta^{(n)}$ is an unknown parameter vector  we wish to
estimate. To improve clarity, we henceforth omit the superscript indicating that all model components given in \eqref{Eq:Model} depends on $n$ ($\theta$ excluded).
 We denote $X_i^T$ for the $i^{th}$ row of $X$. $X$ and $\epsilon$ are
assumed to be independent. We denote $\beta^0$ for the true value of $\beta$. For a given penalty function $\rho$,
the M-estimator  of $\beta$, $\hat{\beta}^\rho$, defined as
\begin{equation}
\label{Eq:MestDef}
\hat{\beta}^\rho=\argmin_\beta\sum_{i=1}^{n}\rho(Y_i-X_i^T\beta).
\end{equation}
If $\rho$ is convex one could alternatively solve the equation
$$\sum_{i=1}^{n}X_i^T\psi(Y_i-X_i^T\beta)=0, \quad \psi:=\rho'.
$$
Huber's classical result \citeyearpar{huber1973robust}  is that if
$p^2/n\rightarrow 0$  then $\sqrt{n}(\hat{\beta}^\rho-\beta^0)$ is asymptotically
normal with a covariance matrix
$$
\frac{E(\psi^2)}{E^2(\psi')}\lim\limits_{n\rightarrow\infty}(X^TX)^{-1}
$$
If it is further assumed that $E(\epsilon_1)=0$, then by using general M-estimation theory it can be shown that this result holds for $p/n\rightarrow 0$.
\cite{portnoy1984asymptotic,portnoy1985asymptotic} derived consistency and asymptotic normality of M-estimators in the robust regression model under weaker assumptions. See also
\cite{maronna1981asymptotic}.

We now claim that in the model described above, a robust regression model where the number of predictors and the  number of observations are similar, M-estimators have undesirable properties. But first, we present our model assumptions:
\begin{Assumptions}
	\def\assName{{\bf\rm M}}
	\item $\lim\limits_{n\rightarrow \infty}\frac{p}{n}=\kappa \in (0,1)$.
	\label{Ass:kappa}
	\assumption The rows of $X$ are i.i.d $N(0,\Sigma)$ for a known covariance matrix sequence $\Sigma=\Sigma_p$. Furthermore, assume the eigenvalues of $\Sigma$ are bounded away from zero, for all $p$. \label{Ass:X}
	\assumption \label{Ass:epsiStandard} $\epsilon_i, i=1,2,...,n$ are i.i.d mean zero random variables with a
	density function $f_\epsilon$. $\ell_\epsilon=\log f_\epsilon$ is concave, bounded from above
	and has  three bounded derivatives, such that $\inf_{|t|<M}\ell''_\epsilon(t)>0$ for any $M<\infty$.
	\assumption \label{Ass:epsiFurther} $f_\epsilon$ is symmetric and for the function $g_\epsilon(u)=f_\epsilon(\sqrt{u})$ we have, for $u>0$ and $k=1,2,\dots$,
	$$
	\left(-\frac{d}{du}\right)^kg(u)\ge0.
	$$
\end{Assumptions}
 Assumption \eqref{Ass:epsiFurther} would be exploited when we will consider the Bayesian formulation of the problem. We note in passing that this assumption is fulfilled by rich family of distributions, such as Student's T distribution, the Laplace distribution, and the more inclusive exponential power family \citep{andrews1974scale,west1987scale}.

  We now take the frequentest point of view, which we will later, in the next section, replace in a Bayesian perspective.   When estimating a multidimensional parameter a loss function is needed to aggregate over the different components. A natural loss function is the $\ell_2$ of the estimation errors. The following proposition motivates our discussion.
 \begin{proposition}
 	\label{Prop:Motiv}
 	Let assumptions \eqref{Ass:kappa}--\eqref{Ass:epsiStandard} and the regularity assumptions needed for Result 1 in \cite{el2013robust} hold.  Let $\hat{\beta}^\rho$ be the M-estimator defined in \eqref{Eq:MestDef} with respect to a non-linear convex function $\rho$. Then $\|\hat{\beta}^\rho-\beta^0\|=O_p(1)$
 \end{proposition}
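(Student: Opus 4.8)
The plan is to exploit the regression equivariance of M-estimators together with Result 1 of \cite{el2013robust}, whose characterization I will bring to bear after two reductions. The first step is to reduce to the case $\beta^0=0$. Since $\hat\beta^\rho=\argmin_\beta\sum_i\rho(Y_i-X_i^T\beta)$, substituting $Y_i=X_i^T\beta^0+\epsilon_i$ and reparametrizing by $b=\beta-\beta^0$ turns the summand into $\rho(\epsilon_i-X_i^Tb)$, so that $\hat\beta^\rho-\beta^0=\argmin_b\sum_i\rho(\epsilon_i-X_i^Tb)=:\hat b$. Because $X$ and $\epsilon$ are independent with the i.i.d.\ structure of \eqref{Ass:X}--\eqref{Ass:epsiStandard}, the law of $\hat\beta^\rho-\beta^0$ is exactly that of the M-estimator computed in the model with true coefficient vector zero and responses $\epsilon_i$. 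Hence it suffices to control $\|\hat b\|$.

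The second step removes the general covariance $\Sigma$, which by \eqref{Ass:X} is symmetric positive definite. Writing the rows as $X_i=\Sigma^{1/2}Z_i$ with $Z_i$ i.i.d.\ $N(0,I_p)$ and setting $a=\Sigma^{1/2}b$, the objective becomes $\sum_i\rho(\epsilon_i-Z_i^Ta)$, so $\hat a=\Sigma^{1/2}\hat b$ is the M-estimator for the whitened isotropic design while $\rho$ and the error law are left untouched. Then $\|\hat b\|^2=\hat a^T\Sigma^{-1}\hat a\le\lambda_{\min}(\Sigma)^{-1}\|\hat a\|^2$, and \eqref{Ass:X} keeps $\lambda_{\min}(\Sigma)$ bounded away from zero uniformly in $p$. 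Thus $\|\hat\beta^\rho-\beta^0\|\le C\|\hat a\|$ for a fixed constant $C$, and it remains to prove $\|\hat a\|=O_p(1)$.

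The third step invokes Result 1 of \cite{el2013robust} for the isotropic design with true coefficient zero and a non-linear convex $\rho$. That result characterizes the limit of $\|\hat a\|$ through a fixed-point system and, in particular, shows that $\|\hat a\|$ converges in probability to a finite deterministic limit $r_\rho(\kappa)$ depending only on $\kappa$, $\rho$ and $f_\epsilon$. Convergence to a finite constant gives $\|\hat a\|=O_p(1)$ at once, and combining with the previous bound yields $\|\hat\beta^\rho-\beta^0\|=O_p(1)$. The non-linearity of $\rho$ is used precisely here: it is what makes the M-estimator well defined and places us inside the regularity hypotheses of Result 1, while \eqref{Ass:kappa}--\eqref{Ass:epsiStandard} supply the $p/n\to\kappa$ scaling, the Gaussian design, and the smoothness and strong convexity of $\ell_\epsilon$ needed for that characterization.

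The only genuine obstacle is bookkeeping rather than analysis: I must verify that the equivariance and whitening reductions leave the hypotheses of Result 1 intact, so that the cited characterization transfers verbatim. The error distribution, the ratio $\kappa$, and the function $\rho$ are all preserved, and the whitened design $Z$ is again i.i.d.\ $N(0,I_p)$, so this verification is routine. I would remark in passing that Result 1 in fact yields the sharper conclusion $r_\rho(\kappa)>0$, so the Euclidean error is not merely bounded but bounded away from zero; although only the $O_p(1)$ statement is claimed, it is this non-vanishing that underlies our case against M-estimation in the $p/n\to\kappa\in(0,1)$ regime.
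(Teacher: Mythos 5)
Your proof is correct and follows essentially the same route as the paper: reduce by equivariance and whitening to the isotropic, zero-coefficient problem, invoke Result 1 of \cite{el2013robust} to get the finite deterministic limit $r_\rho(\kappa)$ of the whitened estimator's norm, and control the effect of $\Sigma$ through the uniform lower bound on its eigenvalues. The paper merely packages the first reduction as the distributional identity of their Lemma 1 (writing $\hat\beta^\rho-\beta^0$ as $\|\hat\beta^{\rho,simp}\|\Sigma^{-1/2}u$ with $u$ uniform on the sphere) rather than carrying it out by hand, so the two arguments are the same in substance.
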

A proof using the results of \cite{el2013robust} is given in Section \ref{Sec:proofs}. Proposition \ref{Prop:Motiv} implies that the so-called $\ell_2-$consistency cannot be achieved by M-estimators under this regime.

However, we now claim M-estimators might be the wrong approach here. Consider specifically the statistical interesting problem arising when the signal and the noise are of the same asymptotic order, i.e, when $X_i^T\beta^0=O_p(1)$. For a moment, let $\Sigma=I$. Then, $X_i^T\beta^0=O_p(1)$ implies $\|\beta^0\|=O_p(1)$. If $\Sigma\ne I$, then since $\Sigma$ is known, the last statement holds if taking $\tilde{X}_i=X_i\Sigma^{-1/2}$ instead of $X_i$. Informally, having the number of predictors in the same scale as the number of observations while considering a finite signal-to-noise ratio, that does not vanish as $n$ grows, implies additional assumptions on $\beta^0$ structure. For example, not too many components of $\beta^0$ can be much larger than $n^{-1/2}$ (in their absolute value), otherwise the signal would be  stronger than assumed.   On the other hand, if $\beta^0$ is concentrated very close to zero for all $n$, then the signal is too week as $X_i^T\beta^0$ is too small.

M-estimation is invariant to translation of $\beta^0$, but some $\beta^0$ values are less expected then others, as we argued in the preceding paragraph. Therefore, another approach, which exploits that knowledge on the parameter vector $\beta^0$ is desirable. Since we know that many of the true coefficients  are smaller than $n^{-1/2}$ (in their absolute value) we could potentially gain better estimates if we  shrink some coefficients towards zero. This can be done using regularization based methods, or alternatively, using a Bayesian approach in a way that shrinkage is encouraged by the specified prior distribution. In this paper, we choose to take the latter option, and in the next section we develop such a Bayesian hierarchical model.

\section{A Bayesian Model}
\label{Sec:BysModel}
A Bayesian model for the robust regression involves at least one more level of
parameters. Assume model \eqref{Eq:Model} holds with a density function $f_\epsilon$ that depends on a parameter $\theta$, and obeys assumptions \eqref{Ass:epsiStandard}-\eqref{Ass:epsiFurther}.  We start with a prior for $\beta$.  As we argued in the previous section, we expect many of its components to be small, while some are considerably larger. To accommodate for this, we present a mixture prior for each $\beta_j, j=1,..,p$, with two normal mixture components. Denote $T=(t_1,...,t_p)\in \{1,2\}^p$ for the vector that indicates for each component $j$ if $\beta_j$ has a large variance $(t_j=1)$ or a small one $(t_j=2)$. We also denote $\phi=E(\sum\limits_{j=1}^{p}\mathbf{1}\{t_j=1\})$ for the number of ``large'' components in $T$. Finally, let $\delta^2_k, k=1,2$ be the variance in each of the mixture components.  Putting all together, the following assumptions depict our prior distributions.
\begin{Assumptions}
	\def\assName{\bf\rm P}
	\setcounter{Assumption} {1}
	\item The prior for $\beta$ is  of an iid mixture of two zero-mean normal variables
	$$
	(t_j-1)|\phi \sim Ber(\phi/p) \qquad \beta_j|t_j,\delta_1,\delta_2 \sim N(0,\delta^2_{t_j})
	$$
	\label{Ass:PriorForm}
	\assumption  $\phi=o_p(n/\log n)$.\label{Ass:PriorqRate}
	\assumption  $\delta^2_1$ is $O_p(\phi^{-1})$ and  $\delta^2_2$ is $O_p(1/n^\xi)$,
	for some fixed $\xi>1$. \label{Ass:deltasOrder}
\end{Assumptions}
Assumption \eqref{Ass:PriorqRate} implies that $\phi$ grows with $n$, but yet it is much smaller than $p$. Note that under assumptions
\eqref{Ass:PriorForm}--\eqref{Ass:deltasOrder} we have
\begin{equation*}
E(\|\beta\|^2 |\delta_1,\delta_2,\phi)=\phi\delta^2_1+(p-\phi)\delta^2_2=O_p(1),
\end{equation*}
so together with the Assumption \eqref{Ass:X} we get that $X_i^T\beta=O_p(1)$.

\textit{Working} prior distributions can be taken for $\phi, \delta_1$ and $\delta_2$, adding another level of hierarchy to the model. In practice, the parameters of these prior distributions are chosen such that  assumptions \eqref{Ass:PriorqRate} and \eqref{Ass:deltasOrder} are fulfilled. In Section \ref{Sec:example}, we present such an example.  Alternatively, $\phi, \delta_1$ and $\delta_2$ may be taken as known values that obeys Assumptions \eqref{Ass:PriorForm}--\eqref{Ass:deltasOrder}. The prior distribution for $\beta$  as specified in Assumptions
\eqref{Ass:PriorForm}--\eqref{Ass:deltasOrder} reflects our knowledge on $\beta$ when we assume the signal and the noise in our model are of the same order. This prior
implies that only a small part of the coefficients can be larger than the $n^{-1/2}$
threshold. This can be stated  formally using Chebyshev's inequality; See Lemma
\ref{Lem:Prr} in Section \ref{Sec:proofs}. Of course, one can think of other prior distributions for $\beta$ having this property.

As for $\theta$, we assume a prior distribution $q(\theta)$, where with some abuse of notation,  $q$ always denotes  a density, and the particular relevant density would be clear from its
argument. Unlike other model parameters, this prior does not change with $n$.


Before moving to the estimation procedure, we present a theoretical result showing that unlike M-estimators, a Bayes estimator for $\beta$ can achieve $\ell_2-$consistency.   
As stated before, M-estimators in our regime are consistent when
considering each coordinate of the vector separately, but not when considering the
parameter vector as a whole. The following theorem  shows that Bayesian
estimator in the discussed model is consistent (in the Euclidean norm sense) for the
parameter vector.
\begin{theorem}
	\label{thm:BetaConsistency}
	Consider the model \eqref{Eq:Model} and assume
	\eqref{Ass:kappa}--\eqref{Ass:epsiStandard} and
	\eqref{Ass:PriorForm}--\eqref{Ass:deltasOrder}. Let $\beta^0$ be the true value of
	$\beta$. Let $\hat{\beta}^*$ be a Bayes estimator with respect to the posterior distribution $q(\beta|Y,X)$ and a loss function $L$ of the form $L(\beta,\beta^0)=L(\|\beta-\beta^0\|)$, for a bounded $L$.  Then $\|\hat{\beta}^*-\beta^0\|\stackrel{p}{\to}0$ as $n\rightarrow \infty$,
\end{theorem}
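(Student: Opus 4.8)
The plan is to reduce the theorem to a posterior-concentration statement and then establish that concentration by a Schwartz/Ghosal--Ghosh--van der Vaart-type likelihood-ratio argument, adapted to the fact that here it is the prior, not the likelihood, that drives localization. First I would show that it suffices to prove
\[ \Pi_n(\eta):=q\big(\|\beta-\beta^0\|>\eta\mid Y,X\big)\stackrel{p}{\to}0\qquad\text{for every }\eta>0. \]
This is where boundedness of $L$ is used. Taking $L$ nondecreasing with $L(0)=0$ and $L(r)>0$ for $r>0$ (implicit in calling it a loss), the posterior risk at the truth is at most $L(\eta)+\|L\|_\infty\Pi_n(\eta)$, hence $o_p(1)$, so the minimized risk attained by $\hat\beta^*$ is also $o_p(1)$; conversely, if $\|\hat\beta^*-\beta^0\|>2\eta$ then on the posterior-mass-$(1-\Pi_n(\eta))$ event $\{\|\beta-\beta^0\|\le\eta\}$ the loss is at least $L(\eta)$, forcing the risk at $\hat\beta^*$ to exceed $L(\eta)(1-\Pi_n(\eta))$, a contradiction. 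This delivers the conclusion from $\Pi_n(\eta)\stackrel{p}{\to}0$.

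To bound $\Pi_n(\eta)$ I would write it as the posterior-odds ratio $N/D$ with
\[ N=\int_{\|\beta-\beta^0\|>\eta}e^{\ell_n(\beta)-\ell_n(\beta^0)}\pi(\beta)\,d\beta,\qquad D=\int_{\|\beta-\beta^0\|\le r_n}e^{\ell_n(\beta)-\ell_n(\beta^0)}\pi(\beta)\,d\beta, \]
where $\ell_n(\beta)=\sum_i\log f_\epsilon(Y_i-X_i^T\beta;\theta)$ and $\pi$ is the mixture prior of \eqref{Ass:PriorForm}. (Since $\theta$ is low-dimensional and $n$-free, I would first argue by standard finite-dimensional Bayesian theory that its posterior concentrates at the truth and then work on the event $\{\theta\approx\theta^0\}$, which does not affect the leading-order analysis of $\beta$.) With $u=\beta-\beta^0$ and $v_i=X_i^Tu$, the expected log-likelihood ratio is $E_\epsilon[\ell_n(\beta)-\ell_n(\beta^0)\mid X]=-\sum_iK(v_i)$, where $K(v)=D\big(f_\epsilon\,\|\,f_\epsilon(\cdot-v)\big)$; by \eqref{Ass:epsiStandard} and symmetry, $K(v)=\tfrac12 Iv^2(1+o(1))$ near $0$ with $I=-E\ell''_\epsilon>0$, so on a bounded ball (to which the prior confines $\beta$, since $E\|\beta\|^2=O_p(1)$) one has $\sum_iK(v_i)\asymp\tfrac{I}{2}\|Xu\|^2$, which by \eqref{Ass:X} is of order $n\|u\|^2$.

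For the denominator I would verify a prior-thickness condition. Because $K$ is quadratic at the origin, Kullback--Leibler neighborhoods of the truth are small $\ell_2$-balls, and it is enough that $\pi(\|u\|\le r_n)\ge e^{-o(n)}$ for a suitable $r_n\to0$; this is exactly where the compatibility of $\beta^0$ with the prior's typical set enters --- boundedly many coordinates above the $n^{-1/2}$ threshold and the rest at the narrow scale, as quantified in Lemma \ref{Lem:Prr}. On such a ball the linear fluctuation $u^TW$ with $W=\sum_i\ell'_\epsilon(\epsilon_i)X_i$ (so $\|W\|=O_p(n)$) is $o_p(n)$ and the quadratic drift is $o(n)$, giving $D\ge e^{-o(n)}$ with probability tending to one.

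The crux is the numerator, and the essential difficulty --- what distinguishes this problem from classical Bayesian asymptotics --- is that the likelihood does \emph{not} separate $\beta^0$ from the far region: by Proposition \ref{Prop:Motiv} the maximizer of $\ell_n$ lies at $\ell_2$-distance $O_p(1)$ from $\beta^0$, so $\sup_{\|u\|>\eta}\{\ell_n(\beta)-\ell_n(\beta^0)\}$ is in fact positive and of order $n$, while the metric entropy of the relevant bounded ball is of order $p\asymp n$, the same order as any likelihood-based testing exponent. Pure Schwartz testing therefore fails, and the separation must come from the prior. I would split $\{\|u\|>\eta\}$ into a sparse part, where $\beta$ has $O(\phi)$ coordinates above the narrow scale, and its complement. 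On the sparse part the $\eta$-entropy is $O(\phi\log p)=o(n)$ --- precisely what \eqref{Ass:PriorqRate} buys --- so the order-$n\eta^2$ Kullback--Leibler separation dominates and likelihood-ratio tests over a net give a contribution $e^{-\Omega(n)}$. On the complement the likelihood gain is still $O(n)$, but any such $\beta$ forces $\gg\phi$ coordinates of size at least the narrow scale into the $N(0,\delta_2^2)$ component, each costing of order $n^{-1}/\delta_2^2=n^{\xi-1}$, for a total prior penalty super-linear in $n$ --- what \eqref{Ass:deltasOrder} with $\xi>1$ buys --- which overwhelms both the $O(n)$ likelihood gain and the $e^{O(n)}$ combinatorial count of assignments $T$. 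The main obstacle is making this second bound quantitative: decomposing the complement over the assignment vector $T$ and over the number of coordinates exceeding $n^{-1/2}$, and balancing the $O(n)$ likelihood advantage of non-sparse fits, the $e^{O(n)}$ entropy factor, and the super-linear prior cost uniformly as $p\asymp n$ diverges. Convexity of $\ell_n$ from \eqref{Ass:epsiStandard} is the main simplifying lever, since along each ray it reduces control of $\ell_n-\ell_n(\beta^0)$ on $\{\|u\|>\eta\}$ to its value on the sphere $\{\|u\|=\eta\}$.
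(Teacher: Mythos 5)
Your route is genuinely different from the paper's. The paper never forms the posterior-odds ratio $N/D$: it first analyzes the MAP estimator directly --- a Taylor expansion of the log-posterior around $\beta^0$, a crude $O_p(\sqrt{p/n})$ bound from the curvature, then a split into the ``large'' coordinates ${\cal M}$ and their complement with a separate $o_p(1)$ argument for each block --- and only at the end transfers the conclusion to $\hat\beta^*$ by arguing that, away from a negligible spike at the origin, the posterior is strictly concave with curvature of order $n$ around $\hat\beta^{MAP}$, so $\hat\beta^*_j-\hat\beta^{MAP}_j=O_p(n^{-1/2})$. Your reduction to posterior contraction plus a Schwartz-type numerator/denominator bound is a legitimate alternative (and your diagnosis that the likelihood alone cannot separate $\beta^0$ from $\{\|u\|>\eta\}$, so the prior must do the work, is exactly the right structural observation and matches the paper's reliance on the prior term in its MAP expansion). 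The denominator bound and the sparse-part testing bound ($\phi\log p=o(n)$ entropy against an order-$n\eta^2$ drift) are plausible as sketched.

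The genuine gap is the numerator bound on the non-sparse part, and it is not only unquantified but mis-accounted. You charge each coordinate exceeding the narrow scale a prior cost of order $n^{-1}/\delta_2^2=n^{\xi-1}$, but that is the cost of keeping it in the $N(0,\delta_2^2)$ component; the mixture \eqref{Ass:PriorForm} lets any such coordinate escape to the $N(0,\delta_1^2)$ component at a cost of only $\log(p/\phi)+\tfrac12\log(\delta_1^{-2}\delta_2^{2})=O(\log n)$ per coordinate. Consequently ``$\gg\phi$ offending coordinates'' is the wrong threshold: since \eqref{Ass:PriorqRate} gives no lower bound on $\phi$, a configuration with, say, $n/\log^2 n$ coordinates perturbed by somewhat more than $n^{-1/2}$ has prior cost $o(n)$ under the correct accounting, and your claimed super-linear penalty evaporates. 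The region that actually has to be killed is $\Theta(n)$ coordinates each perturbed by $\Theta(n^{-1/2})$ --- precisely where Proposition \ref{Prop:Motiv} says the M-estimator sits --- and there the comparison is an $\Theta(n\log n)$ prior penalty against an $O(n)$ likelihood gain, a per-coordinate balance of $\tfrac{\xi-1}{2}\log n$ against $O(1)$ that must be carried out uniformly over the assignment vector $T$ and in the presence of the non-diagonal coupling through $X^TX$. This is the same ledger the paper keeps (its ``peak of height $O(\log n)$ versus curvature $O_p(n)$'' step), so the mechanism is available to you, but as written your decomposition does not close, and the one step you flag as ``the main obstacle'' is exactly the step on which the whole theorem turns.
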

where $\xrightarrow{p}$  denotes convergence in probability. The proof is given in Section \ref{Sec:proofs}.



\section{Sampling from the posterior distribution}
\label{Sec:Sampling}
Modern Bayesian statistics relies on the ability to sample from the posterior distribution. This may pose a challenge especially if the parameter space is high dimensional.  Assumption \eqref{Ass:epsiFurther} aids us in construction of a Gibbs sampler \cite{geman1984stochastic}, with blocked sampling available for the full conditional of $\beta$. Under the condition in Assumption  \eqref{Ass:epsiFurther}, we can write $f_\epsilon$ as a scale mixture of normal distribution, \cite{andrews1974scale},
\begin{equation}
\label{Eq:MixScale}
f_\epsilon(t;\theta)=\int_{0}^{\infty}\frac{1}{\sigma}\varphi\left(\frac{t}{\sigma}\right)q(\sigma^2|\theta)d\sigma^2
\end{equation}
where $\varphi$ is the density function of a standard normal random variable. This implies that while $q(Y|X,\beta,\theta)$ is $\prod_{i=1}^{n}f_\epsilon(Y_i-X_i^T\beta;\theta)$,  $q(Y|X,\beta,\sigma^2)$ is the density of independent normally distributed random variables with mean zero and variance $\sigma^2_i$. The mixing distribution $q(\sigma^2|\theta)$ can be identified in some cases \citep{andrews1974scale,west1987scale}. If $f_\epsilon$ is the density of a Laplace  distribution, as in the example we present in Section \ref{Sec:example}, then $q(\sigma^2|\theta)\propto \exp(-\theta^2\sigma^2/2)$. This representation adds $n$ parameters as an individual $\sigma^2_i$ is artificially introduced to the model for each observation $i$. However, it allows for direct sampling from all the full conditionals of all the parameters, and a Gibbs sampler can be used.  This Gibbs sampler resembles the one suggested for the Bayesian Lasso \citep{park2008bayesian}, especially for the case presented in our example in Section \ref{Sec:example} when the errors have a Laplace distribution. Note however that in their case the scale mixture of normals representation is taken for the prior $\beta$ and the errors are normally distributed, where here we apply this representation to the errors, and the prior for $\beta$ is a mixture of two normal distributions.

We now present the Gibbs sampler when $\Phi,\delta_1$ and $\delta_2$ are known hyperparameters. In Section \ref{Sec:example} we demonstrate how standard conjugate priors can be used for these parameters in a concrete example. The Gibbs sampler iterates between the full conditionals of $\theta,\sigma^2_1,...,\sigma^2_n,t_1,...,t_p$ and $\beta$. Starting with $\theta$, one can sample from $q(\theta|\sigma^2_1...,\sigma^2_n)\propto q(\theta)\prod_{i=1}^{n}q(\sigma^2_i|\theta)$. The mixing distribution $q(\sigma^2_i|\theta)$ is determined by the error distribution $f_\epsilon(t;\theta)$, and depending on the prior $q(\theta)$, we may have a conjugate family (as in our Section \ref{Sec:example} example). Alternatively, we may use Metropolis-Hastings step for $\theta$ only. Since $\theta$ is one dimensional, we expect such a step to marginally affect the computation time of the Bayes estimator.

Moving to each $\sigma^2_i$, we have
\begin{equation}
\label{Eq:sigmaFullConditional}
q(\sigma^2_i|Y_i,X_i, \beta,\theta)\propto \frac{1}{\sigma}\varphi\left(\frac{Y_i-X_i^T\beta}{\sigma_i}\right)q(\sigma^2_i|\theta)
\end{equation}
and depending on the mixture distribution this may be a known distribution  to sample from (see again Section \ref{Sec:example}).

Next, let $\Gamma=\diag(\sigma^2_1, \; \sigma^2_2, \; \dots, \; \sigma^2_n)$  and let $V=\diag(\delta^2_{t_1}, \; \delta^2_{t_2}, \; \dots, \; \delta^2_{t_p})$. The full conditional of $\beta$ is a multivariate normal  with mean $\mu$ and variance $A^{-1}$ where $A=X^T\Gamma^{-1}X+V^{-1}$ and $\mu=A^{-1}X^T\Gamma^{-1}Y$. This is the main advantage of the scale mixture of normals representation for $f_\epsilon$; block sampling of $\beta$ can be used. Finally, denote $p_k=P(t_j=k|\beta_j,\phi,\delta_1,\delta_2)$ for the full conditional of each $t_j$. We have
\begin{equation*}
(p_1\quad  p_2)\propto \left[\frac{1}{\delta_1}\varphi\left(\frac{\beta_j}{\delta_1}\right)\frac{\phi}{p} \quad \frac{1}{\delta_2}\varphi\left(\frac{\beta_j}{\delta_2}\right)\frac{p-\phi}{p}\right].
\end{equation*}
We now comment on alternative posterior sampling strategies, all involve the Gibbs sampler described above. When the mixing distribution $q(\sigma^2|\theta)$ results in a full conditional  $q(\sigma^2_i|Y_i,X_i, \beta,\theta)$  with no available direct sampling,  the proposed Gibbs sampler must use other MCMC procedures for sampling from  the full conditionals of $\sigma_i^2$. Moreover, if Assumption \eqref{Ass:epsiFurther} does not hold, then the scale mixture of normal representation \eqref{Eq:MixScale} cannot be used. Then, sampling from
\begin{equation*}
q(\theta|Y,X,\beta)\propto q(\theta)\prod_{i=1}^{n}f_\epsilon(Y_i-X_i\beta;\theta)
\end{equation*}
cannot necessarily be done in a direct sampling manner, but additional MCMC procedure, such as Metropoils-Hastings should be used. This is not, however, where most of the problem lies. Block sampling for $\beta$ would have been based upon high-dimensional full conditional of $\beta$
\begin{equation*}
q(\beta|Y,X,T,\delta_1,\delta_2)\propto \prod_{j=1}^{p}\delta^{-1}_{t_j}\varphi(\beta_j/\delta_{t_j})\prod_{i=1}^{n}f_\epsilon(Y_i-X_i\beta;\theta).
\end{equation*}
but since $p$ is large, this could be a too ambitious approach, when using Metropolis-Hastings or other MCMC procedures. So as a last resort, one could use a Gibbs sampler that uses $q(\beta_j|Y,X,T,\delta_1,\delta_2,\beta_{-j})$, where $\beta_{-j}$ is the $\beta$ vector without its $j$--th component.  Both the approaches we just described are expected to be more computationally heavy than the procedure we suggested before. We remark that sampling from each $q(\sigma^2_i|Y_i,X_i, \beta,\theta)$ can be parallelized, to shorten computation time  since the $\sigma^2_i$'s are independent given $\theta$ and $\beta$.

\section{Example}
\label{Sec:example}
We present in this section  a specific example.  First we describe the model with more detail, then we move to the estimation procedure and then we present simulation results with concrete numerical values. We assume a Laplace distribution for the errors, that is,
\begin{equation*}
\label{Eq:DBdensity}
f_\epsilon(t;\theta)=\frac{\theta}{2}\exp(-\theta|t|).
\end{equation*}
This implies that the mixing probability is Exponential, with $q(\sigma^2|\theta)\propto \exp(-\theta^2\sigma^2/2)$ \citep{andrews1974scale}. We take an Exponential prior distribution for $\theta^2$ (a Gamma prior distribution would also work). The resulting posterior for $\theta^2$ is a Gamma distribution with a shape parameter of $n/2+1$ and a scale parameter $1+(\sum\limits_{i=1}^{n}\sigma^2_i)/2$. Moving to $\delta_1$ and $\delta_2$. Let $N_k=\sum_{j=1}^{p}\mathbf{1}\{t_j=k\}$. Then given a prior distribution $q(\delta_k)$, the full conditional for $\delta_k$ is simplified to
\begin{equation*}
q(\delta_k|\beta,T)\propto (\delta_k^2)^{-\frac{N_k}{2}}\exp\left(\frac{1}{2\delta^2_k}\sum\limits_{\mathbf{1}\{t_j=k\}}\beta_j^2\right)q(\delta_k).
\end{equation*}
We take here a conjugate Inverse-Gamma prior distribution for $\delta_k$ with a shape parameter $\alpha_k$ and a scale parameter $\gamma_k$. The resulting posterior is Inverse-Gamma with parameters $\alpha'_k=\alpha_k+N_k/2$ and $\gamma'_k=\gamma_k+\sum\limits_{\mathbf{1}\{t_j=k\}}\beta_j^2/2$. An alternative prior distribution here would be the Jeffreys improper prior $q(\delta^2_k)\propto 1/\delta^2_k$. Next, a standard $Beta(\alpha_\phi,\gamma_\phi)$ prior is taken for $\phi/p$ so the resulting posterior is $Beta(\alpha_\phi+N_1,\gamma_\phi+N_2)$. We note here that the values taken for $\alpha_1,\alpha_2,\alpha_\phi, \gamma_1,\gamma_2$ and $\gamma_\phi$ should be on the background of Assumptions \eqref{Ass:PriorqRate}--\eqref{Ass:deltasOrder}. These are working priors that reflect our assumptions on $\beta$. The dimension of $\beta$ grows with $n$, while its $\ell_2$ norm is assumed to be constant. Therefore,  $\phi$, $\delta^2_1$ and $\delta^2_2$ must  change with $n$. This results in a working priors that also change with $n$.

Sampling from the full conditionals of $\beta$ and $T$ remains the same as described in Section \ref{Sec:BysModel}, and is generally the same regardless of $f_\epsilon$, as long as the scale mixture of normal distributions representation \eqref{Eq:MixScale} is used. The same property holds for the sampling from the full conditionals of $\delta_1,\delta_2$ and $\phi$ as described above.

Moving to $\sigma_i^2$, substituting $q(\sigma^2|\theta)\propto \exp(-\theta^2\sigma^2/2)$ in \eqref{Eq:sigmaFullConditional}, we have
\begin{equation*}
q(\sigma^2_i|Y_i,X_i, \beta,\theta)\propto \sigma^{-1}\exp\left(-\frac{(Y_i-X_i^T\beta)^2 + \theta^2\sigma_i^4}{2\sigma^2_i}\right).
\end{equation*}
Similarly to \cite{park2008bayesian}, this implies an Inverse-Gaussian distribution for $\sigma^{-2}_i$ with parameters $a = \theta^2$ and $b = \theta/|Y_i-X_i^T\beta|$, where the Inverse-Gaussian distribution defined as having the density
\begin{equation*}
f(t)=\sqrt{\frac{a}{2\pi}}t^{-3/2}\exp\left[-\frac{a(t-b)^2}{2b^2t}\right]\mathbf{1}\{t>0\}
\end{equation*}
\cite{chhikara1988inverse}. The full conditionals we just described are used in the proposed Gibbs sampler. We now turn to the presentation of simulation study results.
\subsection{Simulation Study}
\label{SubSec:Simulations}
We present simulation results for various $\kappa$ values and for $n=500,2500$. We used 500 and 1000 iterations for the Gibbs sampler described above. Data were simulated in the following way. First, $X$ was simulated under $\Sigma=I$, i.e., each of the iid rows of $X$ was simulated as $N_p(0,I)$. Then, we simulated $\phi, \delta_1$ and $\delta_2$. We used the values $\{\alpha_\phi=30, \beta_\phi=30(3\kappa\log(n)-1)\}, \{\alpha_1=2,\gamma_1=\log(n)/n\}$ and $\{\alpha_2=2, \gamma_2=n^{-1.5}\}$ for the prior distributions of $\phi/p, \delta^2_1$ and $\delta^2_2$, respectively. These values were chosen so $E(\phi)=n/(3\log(n)), E(\delta_1^2)=\log(n)/n,E(\delta_2^2)=n^{-1.5}$ and also to have prior  distributions that are not too concentrated around their means.  Given $\phi$, $T$ was simulated as iid $Ber(\phi/p)$. Then, we simulated $\beta$ as independent normally distributed variables where the prior variance of each $\beta_j$ is $\delta^2_{t_j}$.   Next, we simulate $\theta$ from an Exponential prior $q(\theta)= \exp(-\theta)\mathbf{1}\{\theta>0\}$, and simulate $\epsilon$ as iid random variables that follow Laplace distribution with parameter $\theta$. Finally, the observed data is $\{X,Y\}$, with $Y=X\beta+\epsilon$.

We compared the Bayesian estimator to classical M-estimators, namely least squares and least absolute deviations estimators. We also considered standard regularization-based estimators. Let the objective function to be minimized written as
\begin{equation*}
f(\beta)=\sum_{i=1}^{n}\rho(Y_i-X_i^T\beta)+P_\lambda(\beta)
\end{equation*}
 where $P_\lambda$ is a regularization function and $\lambda$ is a tuning constant chosen here by cross-validation. We considered the following four estimators,  defined by the choice of $\rho$ and $P_\lambda$: Least squares Lasso $(\rho(x)=x^2, P_\lambda(\beta)=\|\beta\|_1)$,  least squares Ridge regression $(\rho(x)=x^2, P_\lambda(\beta)=\|\beta\|^2)$, least absolute deviation Lasso $(\rho(x)=|x|, P_\lambda(\beta)=\|\beta\|_1)$ and least absolute deviation Ridge regression $(\rho(x)=|x|, P_\lambda(\beta)=\|\beta\|^2)$. The first two estimators were obtained using the algorithm described in \cite{friedman2010regularization} and the latter two estimators were calculated using the algorithm  described in \cite{yi2015semismooth} (\texttt{hqreg} package in \textbf{R}).

Table \ref{Tab:Results} presents medians of $\|\hat{\beta}-\beta\|^2$ across 1000 simulations for the estimators we just described. We report the medians and not the means due to a small number of outliers, observed mainly for the regularization based methods. For each method, the $\ell_2$ error grew with $\kappa$ (or with $p$) for a fixed $n$, as one may have expected. The superiority of the proposed Bayes estimator is clearly shown.  For $n=500$ taking 1000 iterations of the Gibbs sampler resulted only in minor gain comparing to using the Gibbs sampler with 500 iterations. For $n=2500$, taking $1000$ iterations was essential to improve this estimator's performance.

Considering Proposition \ref{Prop:Motiv},  the relatively poor performance of the non regularized M-estimators was expected. As expected, for $\kappa>0.3$ the least absolute deviation (LAD)  estimator was preferable over the least squares (LS) estimator. Among the regularization based methods, however, taking the least absolute deviation as a penalty function remained superior for all $\kappa$ values, both for Lasso and Ridge regression. Comparing the Lasso and Ridge regression, the former showed better performance for a fixed penalty function.
\begin{table}[ht]
	\small
	\centering
\caption{Medians values of $\|\hat{\beta}-\beta\|^2$ across 1000 simulations for the proposed Bayes estimator using 500 and 1000 iterations of the Gibbs sampler (Bayes$_{500}$ and Bayes$_{1000}$), the Lasso with square (LAS$_{LS}$) and absolute (LAS$_{LAD}$) deviations  penalty functions, Ridge regression with  square (RID$_{LS}$) and absolute (RID$_{LAD}$) deviations  penalty functions and the standard least squares (LS) and least absolute deviations (LAD) estimators }
\label{Tab:Results}
	\begin{tabular}{ccccccccc}
		\hline
		\multicolumn{9}{c}{$n=500$}\\
	$\kappa$&	 Bayes$_{500}$ & Bayes$_{1000}$  & LAS$_{LS}$  & LAS$_{LAD}$  & RID$_{LS}$ & RID$_{LAD}$ &  LS & LAD \\
\hline
 0.1  & 0.080 & 0.083 & 0.119 & 0.103 & 0.115 & 0.099 & 0.328 & 0.252 \\
 0.2  & 0.104 & 0.095 & 0.127 & 0.115 & 0.136 & 0.128 & 0.643 & 0.584 \\
 0.3  & 0.118 & 0.111 & 0.145 & 0.130 & 0.163 & 0.154 & 1.192 & 1.216 \\
 0.4  & 0.129 & 0.129 & 0.165 & 0.150 & 0.175 & 0.168 & 1.955 & 2.193 \\
 0.5  & 0.141 & 0.137 & 0.173 & 0.161 & 0.195 & 0.187 & 2.877 & 3.367 \\
 0.6  & 0.139 & 0.148 & 0.178 & 0.166 & 0.197 & 0.188 & 4.572 & 5.449 \\
 0.7  & 0.151 & 0.147 & 0.181 & 0.166 & 0.199 & 0.194 & 7.319 & 9.139 \\
 0.8  & 0.154 & 0.155 & 0.191 & 0.178 & 0.213 & 0.207 & 11.830 & 16.177 \\
 0.9  & 0.162 & 0.155 & 0.192 & 0.177 & 0.211 & 0.206 & 24.849 & 31.944 \\
 0.95 & 0.169 & 0.168 & 0.203 & 0.190 & 0.228 & 0.222 & 58.334 & 73.344 \\
		\hline		\multicolumn{9}{c}{$n=2500$}\\
			$\kappa$&	 Bayes$_{500}$ & Bayes$_{1000}$  & LAS$_{LS}$  & LAS$_{LAD}$  & RID$_{LS}$ & RID$_{LAD}$ &  LS & LAD \\
			\hline
 0.1  & 0.077 & 0.073 & 0.098 & 0.081 & 0.100 & 0.087 & 0.301 & 0.231 \\
 0.2  & 0.099 & 0.097 & 0.124 & 0.108 & 0.141 & 0.129 & 0.722 & 0.647 \\
 0.3  & 0.120 & 0.104 & 0.128 & 0.112 & 0.157 & 0.146 & 1.154 & 1.165 \\
 0.4  & 0.127 & 0.120 & 0.145 & 0.128 & 0.171 & 0.163 & 1.985 & 2.119 \\
 0.5  & 0.143 & 0.119 & 0.139 & 0.126 & 0.167 & 0.161 & 3.056 & 3.569 \\
 0.6  & 0.152 & 0.132 & 0.152 & 0.141 & 0.183 & 0.178 & 4.245 & 5.142 \\
 0.7  & 0.155 & 0.138 & 0.155 & 0.142 & 0.184 & 0.178 & 6.737 & 8.396 \\
 0.8  & 0.172 & 0.146 & 0.162 & 0.149 & 0.193 & 0.191 & 12.157 & 15.799 \\
 0.9  & 0.166 & 0.150 & 0.165 & 0.153 & 0.195 & 0.188 & 26.930 & 35.734 \\
 0.95 & 0.180 & 0.157 & 0.171 & 0.156 & 0.208 & 0.204 & 58.033 & 72.705 \\
\end{tabular}
\end{table}
Bayesian methods are often time consuming, especially when the target parameter is high dimensional and sampling from full conditionals is performed. The Gibbs sampler presented in this section involve direct sampling for all the parameters, with blocked sampling for $\beta$, without using additional MCMC steps.   Table \ref{Tab:Times} compares median computation time in minutes between  the different methods. Regularized least squares penalty function methods were considerably faster that the alternatives (excluding simple LS and LAD). However, as Table \ref{Tab:Results} suggests, they were also inferior to the other methods. The suggested Bayesian Gibbs sampler was comparable, and often preferable, in terms of computation time to the regularized least absolute deviations estimators.
\begin{table}[ht]
		\small
			\centering
	\caption{Medians computation times (minutes) across 1000 simulations for the proposed Bayes estimator using 500 and 1000 iterations of the Gibbs sampler (Bayes$_{500}$ and Bayes$_{1000}$), the Lasso with square (LAS$_{LS}$) and absolute  (LAS$_{LAD}$) deviations penalty functions, Ridge regression with  square (RID$_{LS}$) and absolute deviations (RID$_{LAD}$) penalty functions and the standard least squares (LS) and least absolute deviation estimators (LAD)}
	\label{Tab:Times}

	\begin{tabular}{cccccccc}
		\hline
		\multicolumn{8}{c}{$n=500$}\\
		$\kappa$&	Bayes$_{1000}$  & LAS$_{LS}$  & LAS$_{LAD}$  & RID$_{LS}$ & RID$_{LAD}$ &  LS & LAD \\
		\hline
  0.2 & 1.4 & 0.0 & 0.6 & 0.0 & 0.7 & 0.0 & 0.0 \\
  0.4 & 2.2 & 0.0 & 3.7 & 0.0 & 4.5 & 0.0 & 0.0 \\
  0.6 & 2.8 & 0.0 & 6.9 & 0.0 & 6.7 & 0.0 & 0.0 \\
  0.8 & 5.9 & 0.1 & 20.7 & 0.1 & 18.7 & 0.0 & 0.0 \\
  0.9 & 7.6 & 0.2 & 28.2 & 0.1 & 22.8 & 0.0 & 0.0 \\
  0.95 & 8.5 & 0.1 & 34.2 & 0.1 & 26.7 & 0.0 & 0.0 \\
				\hline		\multicolumn{8}{c}{$n=2500$}\\
				$\kappa$&	 Bayes$_{1000}$  & LAS$_{LS}$  & LAS$_{LAD}$  & RID$_{LS}$ & RID$_{LAD}$ &  LS & LAD \\
				\hline
  0.2 & 12.2 & 0.2 & 9.2 & 0.3 & 11.0 & 0.0 & 0.6 \\
  0.4 & 74.6 & 0.7 & 73.8 & 0.9 & 91.8 & 0.1 & 3.0 \\
  0.6 & 39.7 & 0.5 & 56.4 & 0.5 & 65.3 & 0.1 & 2.5 \\
  0.8 & 98.2 & 1.5 & 141.2 & 1.0 & 156.2 & 0.2 & 3.9 \\
  0.9 & 137.8 & 3.0 & 190.9 & 1.2 & 205.9 & 0.2 & 4.2 \\
  9.5 & 396.6 & 2.6 & 353.4 & 1.4 & 385.8 & 0.6 & 4.6 \\
\end{tabular}
\end{table}
\section{Discussion}
\label{Sec:Discuss}
This paper provided a Bayesian alternative to frequentist robust regression when the number of predictors and
the sample size are of the same order. Standard M-estimators are inconsistent when considering the error accumulated over the vector. If it is further assumed that signal and the noise are of the same asymptotic order, then shrinkage of many coefficients is desirable. We presented an hierarchical prior for model parameters and constructed a Bayes estimator suitable for the problem. A scale mixture of normal distribution representation for the errors' distribution allowed us to build an efficient Gibbs sampler with blocked sampling for the coefficients.    Theorem \ref{thm:BetaConsistency} shows that under appropriate conditions, the Bayes estimator in this problem is consistent in the $\ell_2$ sense. This property does not hold for M-estimators in this design. 

 The Bayesian estimator also outperforms regularization based methods. Although these methods, at least for the quadratic penalty function, can be computed much faster, they provide point estimate only, where the Gibbs sampler provides the entire posterior distribution that can be used to inference. This issue was also pointed out in \cite{park2008bayesian}.  It was previously shown that the asymptotic distribution of M-estimators in this design is nontrivial \citep{el2013robust,donoho2013high,karoui2013asymptotic}. In \cite{karoui2013asymptotic}, the distribution of the M-estimator was studied as the limit of a Ridge regularized estimators.  As standard asymptotic theory does not apply here,  the full distribution of the Bayes estimator in this regime remains as a future challenge.

\section{Proofs}
\label{Sec:proofs}
\subsection{Proof of Proposition \ref{Prop:Motiv}}
First, by Lemma 1 in \cite{el2013robust} we can write
\begin{equation*}
\hat{\beta}^\rho-\beta^0\overset{\mathcal{D}}{=}\|\hat{\beta}^{\rho,simp}\|\Sigma^{-1/2}u
\end{equation*}
where $\overset{\mathcal{D}}{=}$ denotes equality in distribution, and where $u$ is a $p$-length vector distributed uniformly on the sphere of radius one and where
\begin{equation*}
\hat{\beta}^{\rho,simp}=\argmin_\beta\sum_{i=1}^{n}\rho(\epsilon_i-\tilde{X}_i^T\beta).
\end{equation*}
with $\tilde{X}_i^T=\Sigma^{-1/2}X_i$ being a mean zero multivariate normal vector with the identity matrix as its variance. Their lemma also asserts that $\|\hat{\beta}^{\rho,simp}\|$ and $u$ are independent. By Result 1 in \cite{el2013robust},  $\|\hat{\beta}^{\rho,simp}\|$ has a deterministic limit denoted here (and there) by $r_\rho(\kappa)$. They further show how $r_\rho(\kappa)$ can be found. Next, by a multivariate central limit theorem, for large enough $p$,  $\sqrt{p}\Sigma^{-1/2}u$ is approximately $N_p(0,\Sigma^{-1})$. Thus, for large enough $n$ we have
\begin{equation}
\label{Eq:normEq}
\|\sqrt{n}(\hat{\beta}^\rho-\beta^0)\|\overset{\mathcal{D}}{=}r^2_\rho(\kappa)\kappa^{-1}\|x\|^2 + o_p(1)
\end{equation}
where $x\sim N_p(0,\Sigma^{-1})$. Let $\Sigma=T\Lambda T^T$ be the spectral decomposition of $\Sigma$. For $v\sim N_p(0,I)$ and $v\overset{\mathcal{D}}{=}v^*$, we have
\begin{equation*}
\|x\|^2=v^TT\Lambda^{-1}T^Tv=(v^*)^T\Lambda^{-1}v^*\ge^{st.}\lambda_{min}^{-1}\chi^2_p
\end{equation*}
where $\ge^{st.}$ symbolizes larger in the stochastic sense,   $\lambda_{min}$ is the minimal  eigenvalue of $\Sigma$, and $\chi^2_p$ is a Chi-squared random variable with $p$ degrees of freedom. Therefore, the right hand side of \eqref{Eq:normEq} is $O_p(n)$ and we are done.
\subsection{Lemma \ref{Lem:Prr}: Presentation and proof}
Denote $B^{C,\eta}_n:=\frac{1}{p}\sum_j
\mathbf{1}\left\{|\beta_j|>Cn^{-\eta/2}\right\}$ for the proportion of coordinates of
$\beta$ that are of order larger than $Cn^{-\eta/2}$.  The following lemma ensures us
that if the prior distribution of $\beta$ admits Assumptions
\eqref{Ass:PriorForm}-\eqref{Ass:deltasOrder} then $B^{C,\eta}_n$ is not far from $\phi/p$,
and consequently, $B^{C,\eta}_n=O_p(\phi/p)=o_p(log(n)^{-1})$. Thus, the proportion of ``large'' coefficient values out of the total number of coefficients goes to zero with probability that goes to one as $n$ grows.
\begin{lemma}
	\label{Lem:Prr}
	Let assumptions \eqref{Ass:kappa}, \eqref{Ass:PriorForm} and \eqref{Ass:PriorqRate}
	hold. Assume \eqref{Ass:deltasOrder} holds with some $\xi$. Then, for all
	$1\le\eta<\xi$, for all $\zeta>0$ and for any constant $C$ we have
	\begin{equation*}
	\label{Eq:LemmaPrior}
	\lim\limits_{n\rightarrow \infty}P\Bigl(\Bigl|B^{C,\eta}_n-\frac{\phi}{p}\Bigr|>\frac{\phi}{p}\zeta\Bigr)=0
	\end{equation*}
\end{lemma}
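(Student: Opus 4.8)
The plan is to condition on the prior hyperparameters $(\phi,\delta_1,\delta_2)$ and exploit that, given these, the coordinates $\beta_1,\dots,\beta_p$ are i.i.d. Indeed, under \eqref{Ass:PriorForm} each $\beta_j$ is, conditionally on $(\phi,\delta_1,\delta_2)$, a two-component normal mixture: $N(0,\delta_1^2)$ with probability $\phi/p$ (the \emph{large} component, $t_j=1$) and $N(0,\delta_2^2)$ with probability $1-\phi/p$. Hence the indicators $W_j=\mathbf{1}\{|\beta_j|>Cn^{-\eta/2}\}$ are i.i.d.\ Bernoulli$(\pi_n)$ and $pB^{C,\eta}_n=\sum_{j=1}^p W_j$ is Binomial$(p,\pi_n)$, where
\[
\pi_n=\frac{\phi}{p}\,p_1+\Bigl(1-\frac{\phi}{p}\Bigr)p_2,\qquad p_k=P\bigl(|Z|>Cn^{-\eta/2}/\delta_k\bigr),
\]
with $Z$ standard normal. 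The argument then splits into (a) showing the conditional mean $\pi_n$ is asymptotically equivalent to $\phi/p$, and (b) a concentration step showing $B^{C,\eta}_n$ is close to $\pi_n$.

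For step (a) I would estimate the two tail probabilities separately. For the small-variance component, \eqref{Ass:deltasOrder} gives $\delta_2^2$ of order $n^{-\xi}$, so $Cn^{-\eta/2}/\delta_2$ is of order $n^{(\xi-\eta)/2}\to\infty$ (using $\eta<\xi$), and a Gaussian tail bound yields $p_2\le 2\exp(-c\,n^{\xi-\eta})$ for some $c>0$. For the large-variance component the key point is that $\delta_1^2$ is of order $\phi^{-1}$, while \eqref{Ass:PriorqRate} forces $\phi^{-1}$ to exceed $(\log n)/n$ up to a constant; since $\eta\ge1$ this makes $\delta_1^2 n^{\eta}\to\infty$, so $Cn^{-\eta/2}/\delta_1\to0$ and therefore $p_1\to1$. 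Combining, $\pi_n/(\phi/p)=p_1+\frac{p-\phi}{\phi}\,p_2\to1$, because $p_1\to1$ and $\frac{p}{\phi}p_2\le 2n\exp(-c\,n^{\xi-\eta})\to0$. In particular $|\pi_n-\phi/p|\le\frac{\zeta}{2}\,\phi/p$ for $n$ large.

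For step (b) I would apply Chebyshev's inequality conditionally: $\mathrm{Var}(B^{C,\eta}_n\mid\phi,\delta_1,\delta_2)=\pi_n(1-\pi_n)/p\le\pi_n/p$, whence
\[
P\Bigl(\bigl|B^{C,\eta}_n-\pi_n\bigr|>\tfrac{\zeta}{2}\tfrac{\phi}{p}\,\Big|\,\phi,\delta_1,\delta_2\Bigr)\le\frac{4p\,\pi_n}{\zeta^2\phi^2}\to0,
\]
since $p\,\pi_n$ is of order $\phi$ and $\phi\to\infty$. Together with the deterministic bound $|\pi_n-\phi/p|\le\frac{\zeta}{2}\phi/p$ from step (a), this controls $P(|B^{C,\eta}_n-\phi/p|>\zeta\phi/p\mid\phi,\delta_1,\delta_2)$ via the triangle inequality. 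To remove the conditioning I would restrict to a \emph{good} event $G_n$ on which $\delta_1^2$ is squeezed between constant multiples of $\phi^{-1}$, $\delta_2^2\le c_3 n^{-\xi}$, and $\phi$ lies between a deterministic sequence $\omega_n\to\infty$ and $c_4 n/\log n$; the $O_p$ bounds in \eqref{Ass:PriorqRate}--\eqref{Ass:deltasOrder} make $P(G_n^c)$ arbitrarily small, all the estimates above hold uniformly over $G_n$, and the standard $O_p$ unconditioning argument finishes the proof.

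The main obstacle is step (a), specifically showing $p_1\to1$: this is exactly where the regime assumptions interlock, since one needs the large prior variance $\delta_1^2$ to sit above the threshold scale $n^{-\eta}$. This requires not merely the stated upper bound $\delta_1^2=O_p(\phi^{-1})$ but the matching lower order ($\delta_1^2$ of \emph{exact} order $\phi^{-1}$, which the non-vanishing-signal requirement $E\|\beta\|^2\asymp1$ supplies) in concert with $\phi=o_p(n/\log n)$ and $\eta\ge1$. The secondary point requiring care is making the tail estimates for $p_1$ and $p_2$ hold uniformly over the conditioning set $G_n$ rather than merely pointwise, so that the conditional bounds can be integrated out.
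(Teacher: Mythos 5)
Your proposal is correct and follows essentially the same route as the paper: both identify $B^{C,\eta}_n$ as an average of i.i.d.\ Bernoulli variables, show the success probability equals $(1+o(1))\,\phi/p$ via Gaussian tail estimates exploiting $\eta<\xi$, and conclude with Chebyshev's inequality. Your version is if anything slightly more careful than the paper's, which dispatches the lower-tail direction with ``the problem is symmetric'' --- precisely the direction that needs your observation that $p_1\to1$, i.e.\ that $\delta_1^2$ must be of exact order $\phi^{-1}$ (as the non-vanishing-signal discussion supplies) rather than merely $O_p(\phi^{-1})$, and which also requires the conditioning on the random hyperparameters that you make explicit.
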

\begin{proof}
	First note that $B^{C,\eta}_n$ is a mean of $p$ independent Bernoulli random
	variables with success probability of
	\begin{align*}
	\nu^{C,\eta}_n&=\frac{\phi}{p}2\biggl[\Phi\Bigl(-\frac{C}{\sqrt{n^\eta\delta_1^2}}\Bigr) + \Phi\Bigl(-\frac{C}{\sqrt{n^\eta\delta_2^2}}\Bigr)\biggr] + 2\Phi\Bigl(-\frac{C}{\sqrt{n^\eta\delta_2^2}}\Bigr)
	\\
	&= \frac{\phi}{p}2\Phi\Bigl(-\frac{C}{\sqrt{n^\eta\delta_1^2}}\Bigr) + o\Bigl(\frac{\phi}{p}\Bigr).
	\end{align*}
	with $\Phi$ being the CDF of a standard normal random variable. The second equality
	results from  $n^\eta\delta_2^2=n^{\eta-\xi}$ (Assumption
	\eqref{Ass:deltasOrder}) and since $\eta<\xi$. Now, this problem is symmetric, so it
	is suffice to show
	\begin{equation*}
	\label{Eq:LemmaPriorOneSide}
	\lim\limits_{n\rightarrow \infty}P\Bigl(B^{C,\eta}_n>\frac{\phi}{p}(1+\zeta)\Bigr)=0.
	\end{equation*}
	By Chebyshev's inequality we have
	\begin{align*}
	P\Bigl(B^{C,\eta}_n>\frac{\phi}{p}(1+\zeta))&\le
	P\Bigl(|B^{C,\eta}_n-\nu^{C,\eta}_n|>\frac{\phi}{p}(1+\zeta)-\nu^{C,\eta}_n\Bigr)\\&\le
	\frac{\nu^{C,\eta}_n}{p(\frac{\phi}{p}(1+\zeta)-\nu^{C,\eta}_n)^2}\\
	&=\frac{\frac{\phi}{p}2\Phi\Bigl(-\frac{C}{\sqrt{n^\eta\delta_1^2}}\Bigr)+o\Bigl(\frac{\phi}{p}\Bigr)}{\frac{\phi^2}{p}\Bigl(1+\zeta-2\Phi\Bigl(-\frac{C}{\sqrt{n^\eta\delta_1^2}}\Bigr)\Bigr)^2+o\Bigl(\frac{\phi^2}{p}\Bigr)}
	\end{align*}
	and the last expression goes to zero as $n\rightarrow\infty$.
\end{proof}

\subsection{Proof of Theorem \ref{thm:BetaConsistency}}
For the simplicity of the proof we will assume that $\Sigma=I$.

Let $\hat\beta^{MAP}=\argmax_{\beta}q(\beta|Y,X)$ be the MAP estimator.  First we will show that $\|\hat\beta^{MAP}-\beta^0\|=o_p(1)$, and then that $\|\hat\beta^{MAP}-\hat\beta^\star\|=o_p(1)$. We 

Starting with $\hat\beta^{MAP}$, will first show the weaker result $\|\hat\beta^{MAP}-\beta^0\|=O_p(1)$.
We will then build on this result to strengthen the conclusion. For any vector $\beta$,  we partition  $\beta$ to two subvectors $\beta_{{\cal M}}$ and $\beta_{{\cal M}^c}$, were {\cal M} is the subset of relatively large coefficients.  We will then use the fact that $\|\hat\beta_{{\cal M}}^{MAP}-\beta_{{\cal M}}^0\|=O_p(1)$ to argue that $\|\hat\beta_{{\cal M}^c}^{MAP}-\beta_{{\cal M}^c}^0\|=o_p(1)$, and the latter to argue that $\|\hat\beta_{{\cal M}}^{MAP}-\beta_{{\cal M}}^0\|=o_p(1)$. Now, for  the details.

By Taylor expansion, the log-posterior of $\beta$ can be written as
\begin{align}
&\nonumber\hspace{-2em}\log q(\beta|Y,X)
\\\nonumber&=  \sum\limits_{i=1}^{n}\ell_\epsilon(Y_i-X_i^T\beta)
+\sum_{j=1}^{p}\log(q\bigl(\beta_j)\bigr)
\\\label{Eq:post0}&=
 \sum\limits_{i=1}^{n} \ell_\epsilon \left(\epsilon_i-X_{i}^T(\beta-\beta^0)\right) + \sum_{j=1}^{p}\log(q\bigl(\beta_j)\bigr)
\\
\begin{split}\label{Eq:post}
&=
\sum\limits_{i=1}^{n}\ell_\epsilon(\epsilon_i) + \sum_{j=1}^{p}\log(q\bigl(\beta_j)\bigr) - (\beta-\beta^0)^T\sum\limits_{i=1}^{n}X_{i} \ell_\epsilon'\left(\epsilon_i\right)
\\ &\hspace{1em}+
\frac{1}{2} (\beta-\beta^0)^T \sum\limits_{i=1}^{n}\ell_\epsilon''\left(\epsilon_i+\alpha_\beta X_i^T(\beta-\beta^0))\right)X_{i}X_{i}^T(\beta-\beta^0),
\end{split}
\end{align}
for some $\alpha_\beta\in[0,1]$.

Since the empirical distribution function converges to the cumulative distribution function
there are  $M<\infty$ and $\gamma<1$ such that
\begin{equation}
\label{epsBounded}
P(\sum \mathbf{1}(|\epsilon_i|>M)  <\gamma n) \to 1.
\end{equation}

 Next we want to argue that we also have
 \begin{equation}
 \label{epsStarBounded}
 P\bigl(\sum \mathbf{1}(|\epsilon_i-X_{i}^T(\hat{\beta}^{MAP}-\beta^0)|>M)<\gamma n\bigr) \to 1.
  \end{equation}
Let $U_n\approx V_n$ if $U_n=O_p(V_n)$ and $V_n=O_p(U_n)$. Now
 \begin{align}
 \begin{split}
    E|\ell(\epsilon)-\ell(0)| &= 2\int_0^\infty \bigl|\ell(x)-\ell(0)\bigr|e^{\ell(x)}dx
    \\
    &=2\int_0^\infty x\bigl|\ell'(\alpha_x x)\bigr|e^{\ell(x)}dx
    \\
    &\le 2\int_0^\infty x\bigl|\ell'( x)\bigr|e^{\ell(x)}dx
    \\
    &= 2\int_0^\infty e^{\ell(x)}dx =2.
    \end{split}
  \end{align}
Hence $n^{-1}\sum_{i=1}^n \ell(\epsilon_i)\stackrel{p}{\to} E\ell(\epsilon)>-\infty.$  Thus, $\sum \ell(\epsilon_i)\approx n$. Since $\hat\beta^{MAP}$ is the maximizer we have that $ q(\hat\beta^{MAP}|Y,X)- q(\beta^0|Y,X)>0$. Consider this difference and recall also line \eqref{Eq:post0}. Consider first the difference in the prior contributions. If $t_j=2$ (see Assumption \eqref{Ass:PriorForm}) then the contribution of the prior can be made larger by making $\beta_j$ smaller, but then $|\beta_j-\beta_j^0|=O_p(n^{-\xi/2})$. If $t_j=1$ we can increase the prior contribution, but there are only $O_p(n/\log n)$ such terms. Thus, $|\sum_{j=1}^{p}\log(q\bigl(\beta_j^{MAP})\bigr) -\sum_{j=1}^{p}\log(q\bigl(\beta_j^0)\bigr)|=O_p(n/\log n)$.
Since $\hat\beta^{MAP}$ is improving over $\beta^0$ and $\sum_i^n\ell(\epsilon_i)\approx n$ we must have that for some $A>0$, $\sum \ell(\epsilon_i-X_{i}^T(\hat{\beta}^{MAP}-\beta^0))>-An$ with high probability. Since $\lim_{x\to\infty}=-\infty$, we can choose a finite $M$  such that both \eqref{epsBounded} and  \eqref{epsStarBounded} hold.  Let
\begin{equation*}
{\cal A}= \{i:\; |\epsilon_i|<M \text{ and } |\epsilon_i-X_{i}^T(\hat{\beta}^{MAP}-\beta^0)|<M \}.
\end{equation*}
We conclude from \eqref{epsBounded} and \eqref{epsStarBounded} that for  large enough $M$, $|\cal A|$ is the same order as $n$. Hence
\begin{align}
\label{Eq:Apsd}
\sum_{i=1}^{n} \ell''_\epsilon \bigl(\epsilon_i+\alpha_{\hat\beta^{MAP}}X_i^T(\hat\beta^{MAP}-\beta^0))\bigr)X_iX_i^T \le \ell''_\epsilon(M)\sum_{\cal A}X_iX_i^T
\end{align}
in the partial order of positive semi-definite matrices.  We conclude from the prior, \eqref{Eq:post}, and we can verify by \eqref{Eq:Apsd}  that  \begin{equation}
    \label{eq:db0}
    \|\hat\beta^{MAP}-\beta^0\|=O_p(\sqrt{p/n})=O_p(1).
\end{equation}
We now build on this result to strengthen the conclusion.
Let ${\cal M}\subseteq\{1,\dots,p\}$ be the set of indices such that $|\beta_j^0|> n^{-\xi}\log n$. Denote by $\beta^0_{\cal M}$ and $\hat\beta^{MAP}_{\cal M}$ the subvectors with indices in ${\cal M}$ of $\beta^0$ and $\hat\beta^{MAP}$, respectively.
Let ${\cal M}^c$ the complementary set and define the corresponding subvectors and submatrix similarly.

We start with estimation error of $\hat\beta^{MAP}_{{\cal M}^c}$. Let $R=\{\beta:\|\beta_{{\cal M}^c}-\beta^0_{{\cal M}^c}\|=n^{-\gamma} \text{ and } \beta_{\cal M}=\hat\beta^{MAP}_{\cal M}\}$, where $0<\gamma<\xi-1$ and consider
\begin{equation}
\label{Eq:Wdef}
W=\max_{\beta\in R}\log q(\beta|Y,X)-\log q(\beta^{0,MAP}|Y,X),
\end{equation}
where $\beta^{0,MAP}$ equals to $\beta^0$ on ${\cal M}^c$ and to $\hat\beta^{MAP}$ on ${\cal M}$.
Substituting \eqref{Eq:post} in \eqref{Eq:Wdef}, the second term on the RHS of \eqref{Eq:post} contributes to $W$  $-n^{\xi}n^{-2\gamma}$, the third term is at most $O_p(n^{-\gamma+1})$, while the last term is  by \eqref{Eq:Apsd} and \eqref{eq:db0}  at most $O_p(n^{-2\gamma+1}) +O_p(n^{-\gamma+1})$. Thus $W<0$, and since $\hat\beta^{MAP}_{{\cal M}^c}$ is the maximizer, $\|\hat{\beta}^{MAP}_{{\cal M}^c}-\beta^0_{{\cal M}^c}\|=o_p(1)$.

This means that the error in estimation $\beta_{{\cal M}^c}$ has negligible contribution. The estimating equation for $\beta_{{\cal M}}$ with only the $t=1$ part of the prior is a ridge regression with $|{\cal M}|=o_p(n)$ variables with negligible ridge penalty, thus standard   mean calculations and Assumption \eqref{Ass:kappa} would show that $\|\hat\beta_{\cal M}^{MAP}-\beta_{\cal M}^0\|=o_p(1)$. By Assumptions \eqref{Ass:PriorForm}--\eqref{Ass:deltasOrder} the $t=2$ part has a relative peak at $0$ of height of the ratio of the two standard deviations (Assumption \eqref{Ass:deltasOrder}) and the ratio of the mixing probabilities (Assumption \eqref{Ass:PriorForm}). It  contributes the log of the product of these two terms, $O(\log n)$, but only for components which are  in an $O(n^{-\xi/2})$ of 0. But the curvature of the likelihood is $O_p(n)$  and the corresponding components of $\beta^0$ are $O_p\bigl(\phi^{-1/2}\bigr)$, thus the gain in the prior cannot balance the loss in the likelihood which is of order $n\phi^{-1}$ per component which is this neighborhood of 0.

\color{red} 


\color{black}

Having established that the MAP estimator is in $o(1)$ neighborhood of the truth, we consider now $\hat\beta^*$.  Again, consider the partition to $\beta_{{\cal M}}$ and $\beta_{{\cal M}^c}$. The log-prior of any  $\beta_j$ is
\begin{equation*}
\begin{split}
\log\Bigl(\frac{c_1^{1/2}\phi^{3/2}}{p}e^{-c_1\phi\beta_j^2} + c_2^{1/2}n^{\xi/2}e^{-c_2n^{\xi}\beta_j^2} \Bigr),
\end{split}
\end{equation*}
where $c_1,c_2=O(1)$. Thus, for $|\hat\beta_j^{MAP}|<L_n n^{-\xi/2}$, for $L_n\to\infty $ slowly, the central component of the prior dominates, and the \emph{a posteriori} mass of this ball is negligible. For $\hat\beta_j^{MAP}\approx \phi^{-1/2}$, we argued that the peak at 0 is much smaller than the value at $\hat\beta^{MAP}_j$. Since the radius of  this peak is of order $n^{-\xi}$. It can be ignored. Ignoring this neighborhood of 0, the \emph{a posteriori} is strictly concave with maximum at $\hat\beta_j^{MAP}$, and curvature of order $n$ and  hence $\hat\beta^*_j-\hat\beta^{MAP}_j=O_p(n^{-1/2})$.

\qed

\bibliography{robust}
\bibliographystyle{plainnat}

\end{document}